\documentclass[11pt,reqno,a4paper]{article}

\usepackage[T1]{fontenc}
\usepackage[english]{babel}
\usepackage[utf8]{inputenc}
\usepackage{amsmath,amsthm,amssymb}
\usepackage{mathtools}
\usepackage{microtype}
\usepackage[a4paper,left=2.5cm,right=2.5cm,top=2.5cm,bottom=3cm,
  footskip=1.6cm]{geometry}

\usepackage{todonotes}

\numberwithin{equation}{section}

\newtheorem{theorem}{Theorem}[section]

\newtheorem{corollary}[theorem]{Corollary}
\newtheorem{proposition}[theorem]{Proposition}
\newtheorem{remark}[theorem]{Remark}

\usepackage[hidelinks]{hyperref}
\usepackage[nameinlink]{cleveref}

\newcommand{\lin}{\ensuremath{\mathrm{lin}}}

\newcommand{\C}{\mathbb C}

\newcommand{\ip}[2]{\langle #1,#2\rangle}
\newcommand{\norm}[1]{\lVert #1\rVert}
\DeclareMathOperator{\dist}{dist}
\DeclareMathOperator{\spann}{span}
\DeclareMathOperator{\ran}{ran}
\DeclareMathOperator{\tr}{tr}

\begin{document}

\title{A sharp bound for sampling widths in the uniform norm: \\kernel $D$-optimal designs and oversampling}

\author{Sebastian Neumayer \(\!{}^{a}\),
Tino Ullrich \(\!{}^{a,}\)\footnote{Corresponding author, Email:
tino.ullrich@math.tu-chemnitz.de}\\[2mm]
\({}^{a}\!\) Chemnitz University of Technology, Faculty of Mathematics}

\date{\today}
\maketitle

\begin{abstract}
For every reproducing kernel Hilbert space $\mathcal{H}_K$ with bounded kernel $K$, we prove that the linear sampling  and Gelfand widths in the uniform norm satisfy
\[
 g_m^{\lin}(B_{\mathcal H_K})_\infty
 \le \frac{m+1}{m-n+1}\,
 c_n(B_{\mathcal H_K})_\infty,
 \qquad m\ge n.
\]
In a certain sense, the leading factor \((m+1)/(m-n+1)\) on the right-hand side is sharp. We transfer the problem into the selection of a  maximal volume subset of kernel translates (kernel $D$-optimal design).
Apart from basic linear algebra, our proof employs low rank approximation techniques and a version of the Eckart-Young-Mirsky theorem. 
The same argument gives corresponding bounds for the selection  of a reduced basis in Hilbert spaces.
\end{abstract}

\section{Introduction}

Sampling recovery in the uniform norm has attracted significant interest in recent years \cite{PU,KKT21,KPUU23,KPUU24}, driven by several breakthroughs in the field. 
Note, however, that the problem has a much longer history, see \cite{KWW08, DuTeUl16}.  
In this note, we complement the recent results in \cite{NPU26} by a direct comparison between sampling and Gelfand widths.
While \cite{NPU26} focuses on constructive greedy methods for reproducing kernel Hilbert spaces (RKHS) $\mathcal{H}_K$, we prove an existence result based on kernel $D$-optimal designs. 

Let \(D\) be a nonempty set and let \(\mathcal H_K\) be a complex RKHS on \(D\), with reproducing kernel $K\colon D\times D \to \mathbb{C}$ satisfying
\begin{equation}
    \sup_{x\in D}\sqrt{K(x,x)}<\infty\quad \mbox{and}\quad f(x) = \langle f,K(\cdot,x)\rangle,\quad x\in D,
\end{equation}
so that \(\mathcal H_K\) embeds continuously into \(B(D)\), the space of bounded functions on \(D\).
As usual, let \(g_m^{\lin}(B_F)_\infty\) denote the linear sampling width of the unit ball $B_F$ of some function space $F$ that embeds continuously into \(B(D)\) \cite{NPU26}, i.e., 
\begin{equation}
g_m^{\lin}(B_F)_\infty \coloneqq \inf_{\substack{x_1,\dots,x_m\in D\\ \varphi\colon \C^m\to B(D) \text{ linear}}}
\sup_{\|f\|_F\leq 1} \bigl\| f-\varphi\bigl(f(x_1),\ldots,f(x_m)\bigr)\bigr\|_\infty \label{eq:glin},
\end{equation}
which represents the minimal worst-case error in the
uniform norm for linear recovery algorithms based on
\(m\) function values.
The corresponding worst-case error of arbitrary algorithms based on \(n\) bounded linear measurement functionals, namely the \(n\)th Gelfand width \(c_n\), is given by
\begin{equation}
c_n(B_F)_\infty
\coloneqq
\inf_{\substack{N\colon F\to\C^n\ {\rm bounded\ linear}\\\varphi\colon\C^n\to B(D)}} \sup_{\|f\|_F\leq 1}\norm{f-\varphi(Nf)}_\infty. \label{eq:cn}
\end{equation}
The recovery map \(\varphi\) in \eqref{eq:cn} is arbitrary.
In our setting (RKHS and uniform norm), the Gelfand widths equal the linear widths $a_n$, which are also known as approximation numbers.
Their precise definition is recalled in Section~\ref{sec:dictionary}. 
Our main result is a direct comparison of the two quantities for different indices $m$ and $n$ (oversampling) for RKHS with bounded kernel.
The result is based on maximizing the Gramian kernel determinant (kernel $D$-optimal design).
The term $D$-optimal design originates in statistics literature for optimal experimental designs.
However, the general principle (maximal volume concept) has been successfully applied at various places, see \cite{GoTy01, BaKaPoSchUl25} and the references therein. Our main result reads as follows.     

\begin{theorem}\label{thm:main}
Let \(\mathcal H_K\) be an RKHS on \(D\) with bounded kernel.
Then, for all integers \(m\ge n\ge1\),
\begin{equation}\label{eq:main}
g_m^{\lin}(B_{\mathcal H_K})_\infty
 \le\frac{m+1}{m-n+1}\,
 c_n(B_{\mathcal H_K})_\infty.
\end{equation}
Moreover, for every \(C<(m+1)/(m-n+1)\), there is an RKHS \(\mathcal H_K\) with strictly positive definite bounded kernel such that
\begin{equation}
  g_m^{\lin}(B_{\mathcal H_K})_\infty > C\,c_n(B_{\mathcal H_K})_\infty .
\end{equation}
\end{theorem}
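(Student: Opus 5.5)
We first reduce the right-hand side to a quantity expressible through the kernel. For an RKHS with the uniform norm, $c_n = a_n$, and the worst-case error of the orthogonal projection $P_V$ onto an $n$-dimensional subspace $V\subset\mathcal H_K$ is
\[
\sup_{\|f\|\le1}\|f-P_Vf\|_\infty=\sup_{x\in D}\|(I-P_V)K(\cdot,x)\|.
\]
Hence $c_n^2=\inf_{\dim V= n}\sup_x \|(I-P_V)K_x\|^2=:\sigma_n^2$, where $K_x=K(\cdot,x)$. For the upper bound we consider, for points $x_1,\dots,x_m$, the recovery $f\mapsto P_Wf$ with $W=\spann\{K_{x_1},\dots,K_{x_m}\}$. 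This map is linear in the samples $f(x_j)=\ip{f}{K_{x_j}}$ (kernel interpolation), and its error is $\sup_x\|(I-P_W)K_x\|=\sup_x \dist(K_x,W)$. It therefore suffices to find $m$ points with
\[
\sup_x\dist(K_x,W)^2\le \Bigl(\tfrac{m+1}{m-n+1}\Bigr)^2\sigma_n^2 ,
\]
and we will in fact aim at the stronger bound $\frac{m+1}{m-n+1}\sigma_n^2$, which implies the claim since the factor is at least $1$.

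For the point selection we use a kernel $D$-optimal design. We choose $x_1,\dots,x_m$ maximizing (up to a factor $1+\varepsilon$, since the sup may not be attained) the Gram determinant $\det G(x_1,\dots,x_m)$, where $G=(K(x_i,x_j))$. We may assume the kernel has rank at least $m$, since otherwise exact recovery holds. For any $x$, the Schur complement identity gives
\[
\det G(x_1,\dots,x_m,x)=\det G(x_1,\dots,x_m)\,\dist(K_x,W)^2 .
\]
Expanding $\det G(x_1,\dots,x_{m+1})$ with $x_{m+1}=x$ and replacing each $x_i$ by $x$, maximality yields $\dist(K_x,W)^2\le \dist(K_{x_i},W_{-i})^2$, where $W_{-i}$ denotes the span of the other $m$ translates. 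The standard volume-ratio argument then gives
\[
\dist(K_x,W)^2\le\frac{\det G_{m+1}}{\det G_m}\le \frac{1}{m+1}\cdot\frac{\sum_{S}\det G_S}{\dots}\,,
\]
that is, we bound the ratio $E_{m+1}/E_m$ of elementary symmetric functions of the eigenvalues $\lambda_1\ge\lambda_2\ge\cdots$ of the Gram matrix of the $m+1$ points, where $E_k=e_k(\lambda)$. The key inequality is then the known low-rank (Deshpande–Rademacher–Vempala–Wang volume sampling) bound
\[
(m+1)\frac{e_{m+1}(\lambda)}{e_m(\lambda)}\le \frac{m+1}{m-n+1}\sum_{j>n}\lambda_j .
\]
Finally, we need $\sum_{j>n}\lambda_j\le (m+1)\sigma_n^2$. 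By the Eckart–Young–Mirsky theorem, $\sum_{j>n}\lambda_j$ is the minimal value of $\sum_{i=1}^{m+1}\|(I-P_V)K_{x_i}\|^2$ over $n$-dimensional $V$, and this is at most $(m+1)\sup_x\|(I-P_V)K_x\|^2$. Combining these gives $\dist(K_x,W)^2\le\frac{m+1}{m-n+1}\sigma_n^2$.

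The main obstacle is exactly this chain. Since we can only compare the new point $x$ with the design points, the argument has to work with the $(m+1)$-point Gram matrix that includes $x$. We would try to show that maximality gives $\dist(K_x,W)^2\le \frac{e_{m+1}}{e_m}$, by summing the $m+1$ inequalities $\det G_{-i}\le\det G_m$ to obtain $(m+1)\det G_m\ge e_m$. With the spectral bound above this yields $\frac{1}{m-n+1}\sum_{j>n}\lambda_j\le\frac{m+1}{m-n+1}\sigma_n^2$, which is actually the squared version of the claim. It is plausible that the sharp statement is for squared quantities, with the square root then absorbed since $\sqrt t\le t$ for $t\ge1$.

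For sharpness, we take $D$ finite with $N$ points and a kernel $K_x=e_x+\delta$-perturbed equiangular frame, for instance $N$ vectors in $\C^{n+\cdot}$ that make the volume-sampling inequality tight, such as a simplex-type configuration with many points. The aim is to compute $g_m^{\lin}$ and $c_n$ explicitly by symmetry and let $N\to\infty$, adding a small identity perturbation to obtain strict positive definiteness.
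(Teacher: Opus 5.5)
Your architecture matches the paper's: kernel interpolation as projection onto $W=\spann\{K_{x_1},\dots,K_{x_m}\}$, an approximate kernel $D$-optimal design, the Schur identity $\det G(x_1,\dots,x_m,x)=\det G\cdot\dist(K_x,W)^2$, the bound $e_m(\lambda)=\sum_j\det G_a^{(j)}\le(m+1)\Delta_m$ for the $(m+1)$-point Gram matrix $G_a$, and Eckart--Young--Mirsky for $\sum_{j>n}\lambda_j\le(m+1)\sigma_n^2$. However, the chain does not close.

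From $e_m\le(m+1)\det G_m$ you get $\dist(K_x,W)^2=e_{m+1}/\det G_m\le(m+1)\,e_{m+1}/e_m$. Your last paragraph drops this factor $m+1$. In fact $\dist(K_x,W)^2\le e_{m+1}/e_m$ is false, since $e_m/e_{m+1}=\tr(G_a^{-1})$ strictly exceeds the diagonal entry $(G_a^{-1})_{m+1,m+1}=\dist(K_x,W)^{-2}$. If you chain your inequalities correctly, using the volume-sampling bound $e_{m+1}/e_m\le\frac{1}{m-n+1}\sum_{j>n}\lambda_j$, you only get $\dist(K_x,W)^2\le\frac{(m+1)^2}{m-n+1}\sigma_n^2$. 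That is the constant $\frac{m+1}{\sqrt{m-n+1}}$, of order $\sqrt n$ for $m=2n$, so it does not prove \eqref{eq:main}.

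The bound you actually state, $\dist(K_x,W)^2\le\frac{m+1}{m-n+1}\sigma_n^2$, is false. In Proposition~\ref{prop:sharp-index}, with $N=m+1$ and $k=m-n+1$, one has $\tau_m^2/d_n^2=N^2/(k(k+n\delta^2))>N/k$ for every $\delta<1$. So the constant is sharp for the unsquared quantities, and the idea that a square root gets absorbed is wrong.

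The missing idea is that the generic Deshpande--Rademacher--Vempala--Wang/Guruswami--Sinop inequality, built for spectra of arbitrary length, is too weak here by a factor $m-n+1$. Since $G_a$ has exactly $m+1$ eigenvalues, $e_m/e_{m+1}=\tr(G_a^{-1})=\sum_{j=1}^{m+1}\lambda_j^{-1}$. Dropping the top $n$ terms and using the arithmetic--harmonic mean inequality gives
\[
\frac{e_m}{e_{m+1}}\ \ge\ \sum_{j=n+1}^{m+1}\lambda_j^{-1}\ \ge\ \frac{(m-n+1)^2}{\sum_{j>n}\lambda_j}\ \ge\ \frac{(m-n+1)^2}{(m+1)\sigma_n^2}.
\]
Hence $\dist(K_x,W)^2\le(m+1)\,e_{m+1}/e_m\le\bigl(\frac{m+1}{m-n+1}\bigr)^2\sigma_n^2$, which is exactly how the paper concludes.

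Two further gaps remain.
\begin{itemize}
\item Your reduction only shows $a_n\le\sigma_n$ via orthogonal projections. The upper bound needs the reverse, $c_n\ge\sigma_n$. That requires $c_n=a_n$ together with the estimate $\|K_x-\sum_j\overline{g_j(x)}v_j\|\ge\dist(K_x,\spann\{v_j\})$ for arbitrary rank-$n$ operators $\sum_j\ip{\cdot}{v_j}g_j$.
\item The sharpness part is not yet an argument. The paper takes $N=m+1$ points $\mathbf a_j=(\mathbf P_{U^\perp}+\delta\mathbf P_U)\mathbf e_j$, with $U$ spanned by $k$ Fourier columns so that every $\mathbf e_j$ has mass $k/N$ on $U$. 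It computes $d_n=\delta\sqrt{k/N}$ via Eckart--Young--Mirsky and $\tau_m=\delta\sqrt{N/(k+n\delta^2)}$ via the dual vectors $\mathbf T_\delta^{-*}\mathbf e_j$. It then lets $\delta\downarrow0$ with $N$ fixed, not $N\to\infty$.
\end{itemize}
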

Let us discuss several aspects and special cases of this result.
For $m=n$, we recover the well-known bound 
\begin{equation}
    g_n^{\lin}(B_{\mathcal H_K})_\infty
 \le(n+1) c_n(B_{\mathcal H_K})_\infty,
\end{equation}
coming as a consequence of \cite[Thm.\ 29.7]{NW3}.
In addition, Theorem \ref{thm:hilbert} recovers also \cite[Thm.~4.1]{BCDDPW11} in case $m=n$. 
With double oversampling, namely $m=2n$, we have 
\begin{equation}\label{f21}
    g_{2n}^{\lin}(B_{\mathcal H_K})_\infty
    \le\frac{2n+1}{n+1}\,c_n(B_{\mathcal H_K})_\infty\leq 2c_n(B_{\mathcal H_K})_\infty,
\end{equation}
which improves upon \cite[Thm.\ 20, (15)]{KPUU24} by a $\sqrt{n}$-factor for the case of an RKHS. Note that the analysis in \cite[Thm.\ 20, (15)]{KPUU24} also relies on a $D$-optimal design  for constructing the measure $\mu$ to reduce the problem to $L_2(\mu)$ recovery.

A direct precursor in the RKHS setting is \cite[Thm.~11]{KPUU23}, which establishes a bound of the form \eqref{f21} under additional structural and spectral assumptions on the kernel.
This result motivated both the
present work and \cite{NPU26}.
Theorem~\ref{thm:main} shows that \eqref{f21} holds under the minimal assumption of a bounded kernel.
It follows from the general Hilbert-space result in Theorem~\ref{thm:hilbert} through the transference principle of Section~\ref{sec:dictionary}, recently established in \cite{NPU26}.
Compared with \cite[Thm.~2.5]{NPU26}, it removes the logarithmic oversampling factor and any decay assumption on the widths.
The oversampling ratio \(b=m/n\) may be chosen arbitrarily close to \(1\), at the expense of a larger constant.
More precisely, for every \(b>1\), Theorem~\ref{thm:main} gives
\begin{equation}\label{eq:b}
g_{\lceil bn\rceil}^{\lin}(B_{\mathcal H_K})_\infty
\le \frac{b}{b-1}\,
c_n(B_{\mathcal H_K})_\infty,
\qquad n\ge1.
\end{equation}
For comparison, the \(L_2\)-recovery result \cite[Cor.~6.4]{BaSchUl23} has the larger factor
\((b/(b-1))^{3/2}\) multiplying the corresponding Kolmogorov width.
Its proof uses subsampling via spectral sparsification, whereas the present argument is based on \(D\)-optimal designs.

The paper is organized as follows.
Section~\ref{sec:selection} proves the Hilbert space subset selection bound Theorem~\ref{thm:hilbert}, which itself complements results from the reduced-basis community \cite{BuffaEtAl12, BCDDPW11, DPW13}, see also \cite{DPSW26, Wojtaszczyk15}.
In Section \ref{sec:sharpness}, we show that the leading factor on the right-hand side is optimal in a certain sense, see Proposition \ref{prop:sharp-index}. 
Afterwards, based on the recent results in \cite{NPU26}, we identify in Section \ref{sec:dictionary} sampling recovery in the uniform norm with a certain Hilbert space subset selection problem involving the kernel translates, where the error is measured in the $\mathcal{H}_K$-norm.
Similarly, the Gelfand widths are identified with corresponding Kolmogorov widths in $\mathcal{H}_K$. 
Putting all the pieces together finally enables us to prove Theorem \ref{thm:main} at the end of Section \ref{sec:dictionary}.

\paragraph{Notation.} Throughout, vectors and matrices are typeset in boldface.
Matrices $\mathbf A \in \C^{m \times n}$ are equipped with the spectral norm $\|\mathbf A\|_{2}$ and the Frobenius norm $\|\mathbf A\|^2_{F} = \sum_{i=1}^m\sum_{j=1}^n |a_{i,j}|^2$.
By $\mathbf{A}^*$, we denote the Hermitian (transpose) of $\mathbf{A}$ such that $\|\mathbf A\|^2_{F}  = \tr(\mathbf{A}^*\mathbf{A})$, where for a square matrix $\mathbf{B}=(b_{i,j})_{i,j} \in \C^{n\times n}$ the trace is defined as usual via $\tr(\mathbf{B}) = \sum_{i=1}^n b_{i,i} =  \sum_{i=1}^n \mathbf{u}_i^* \mathbf{B}\mathbf{u}_i$ for any orthonormal basis $\{\mathbf{u}_1,\ldots,\mathbf{u}_n\} \subset \C^n$.
Vectors $\mathbf{v} \in \C^n$ are frequently identified as column vectors (matrices of the form $\C^{n \times 1}$) and $\mathbf{v}^*$ as row vectors in $\C^{1\times n}$ with complex conjugated entries.
For a Hilbert space $\mathcal H$ and a closed subspace $V\subset \mathcal H$, we define $\dist_{\mathcal H}(a,V)\coloneqq  \inf_{g\in V}\|a-g\|_{\mathcal H}$.
In case of $\C^n$ equipped with the Euclidean norm, we also write $\mathcal H=\ell_2^n$.
\section{A Hilbert space subset selection bound}\label{sec:selection}

For a bounded subset \(A\) of a Hilbert space \(\mathcal H\), we define the \emph{subset width}
\begin{equation}\label{eq:tau_m}
    \tau_m(A)_{\mathcal H}
\coloneqq
\inf_{\substack{S\subset A\\|S|\le m}}
\sup_{a\in A}\dist_{\mathcal H}(a,\spann S).    
\end{equation}
In reduced-basis terminology, the selected elements are snapshots from
\(A\).
The following result controls the Hilbert space error for an optimal snapshot set in terms of the  Kolmogorov widths
\begin{equation}
    d_n(A)_{\mathcal H}
 \coloneqq
 \inf_{\substack{V\subset {\mathcal H}\\\dim V\le n}}
 \sup_{a\in A}\dist_{\mathcal H}(a,V).
\end{equation}
Note that \(\tau_m\) is an existence quantity: it measures the error of a best possible \(m\)-element subset, not the error of the set produced by a (weak) greedy algorithm as studied in \cite{BCDDPW11,BuffaEtAl12,DPW13,NPU26}.

\begin{theorem}[Hilbert subset selection]\label{thm:hilbert}
For every nonempty bounded subset \(A\) of a real or complex Hilbert
space \(\mathcal H\) and all integers \(m\ge n\ge1\), it holds that
\begin{equation}\label{eq:hilbert}
\tau_m(A)_{\mathcal H}
\le\frac{m+1}{m-n+1}\,d_n(A)_{\mathcal H}.
\end{equation}
\end{theorem}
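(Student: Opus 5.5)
We select \(S=\{a_1,\dots,a_m\}\subset A\) by a maximal-volume (kernel \(D\)-optimal) principle and compare the error to \(d_n(A)_{\mathcal H}\) through a determinant ratio combined with low-rank approximation.

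\textbf{Reduction to finitely many points.} First we reduce to finite configurations: it suffices to show that for every \(\varepsilon>0\) there is \(S\subset A\) with \(|S|\le m\) and \(\sup_{a\in A}\dist(a,\spann S)\le \frac{m+1}{m-n+1}(d_n+\varepsilon)\). Fix an \(n\)-dimensional \(V\) with \(\sup_a\dist(a,V)\le d_n+\varepsilon=:\delta\), and let \(P\) denote the orthogonal projection onto \(V^\perp\), so \(\|Pa\|\le\delta\). If \(A\) spans a space of dimension \(<m+1\), we can take \(S\) to be a spanning subset and the error is \(0\). Otherwise, consider the Gram determinants \(G(a_1,\dots,a_m)=\det(\langle a_i,a_j\rangle)\). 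These are bounded on \(A^m\), so we can choose \((a_1,\dots,a_m)\) with \(G\ge (1-\eta)\sup G>0\). By the standard determinant identity, for any \(a\in A\),
\[
\dist(a,\spann S)^2=\frac{G(a_1,\dots,a_m,a)}{G(a_1,\dots,a_m)}.
\]

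\textbf{Averaging over exchanges.} Near-maximality gives, for each \(i\), \(G(S\setminus a_i\cup a)\le (1-\eta)^{-1}G(S)\). Let \(\mathbf M\) be the \((m+1)\)-column matrix (coordinates in a finite-dimensional space containing everything) with columns \(a_1,\dots,a_m,a\). Writing \(\sigma_k\) for its singular values, we have \(\det(\mathbf M^*\mathbf M)=\prod_{k=1}^{m+1}\sigma_k^2\), and the Cauchy--Binet / Jacobi identity gives \(\sum_{i=1}^{m+1}G(\text{columns}\setminus i)=e_m(\sigma_1^2,\dots,\sigma_{m+1}^2)\), the \(m\)-th elementary symmetric polynomial. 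Hence
\[
\dist(a,\spann S)^2=\frac{G_{m+1}}{G(S)}\le \frac{(m+1)\,e_{m+1}(\sigma^2)}{(1-\eta)\,e_m(\sigma^2)},
\]
because each of the \(m+1\) terms in \(e_m\) is at most \(G(S)/(1-\eta)\).

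\textbf{Low-rank step (main obstacle).} It remains to bound \(e_{m+1}/e_m=\bigl(\sum_k\sigma_k^{-2}\bigr)^{-1}\). By Eckart--Young--Mirsky, the tail \(\sum_{k>n}\sigma_k^2\le\|P\mathbf M\|_F^2\le (m+1)\delta^2\), since \(\mathbf M\) minus its rank-\(n\) component \((I-P)\mathbf M\) equals \(P\mathbf M\). Restricting the harmonic sum to the \(m+1-n\) smallest singular values gives
\[
\Bigl(\sum_k\sigma_k^{-2}\Bigr)^{-1}\le \Bigl(\sum_{k>n}\sigma_k^{-2}\Bigr)^{-1}\le\frac{\sum_{k>n}\sigma_k^2}{(m+1-n)^2}
\]
by AM--HM. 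This yields
\[
\dist^2\le\frac{(m+1)^2\delta^2}{(m-n+1)^2(1-\eta)},
\]
which is exactly the claimed constant after letting \(\eta,\varepsilon\to0\). The delicate points are the exact exchange/Cauchy--Binet identity for \(e_m\) and the AM--HM step over the tail; I expect the constant to be tight precisely there. Finally, to handle infinite-dimensional \(\mathcal H\) we work in the finite-dimensional span of \(V\), \(S\), and \(a\), which suffices for each fixed \(a\).
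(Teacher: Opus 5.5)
Your proposal is correct and follows essentially the same route as the paper: an approximate maximizer of the Gram determinant, the determinant-ratio formula for $\dist(a,\spann S)^2$, bounding the sum of the $m\times m$ principal minors of the extended Gram matrix by $(m+1)$ times the maximal determinant (your $e_m(\sigma^2)$ is exactly the paper's $\tr(\operatorname{adj}\mathbf G_a)=\tr(\mathbf G_a^{-1})\det\mathbf G_a$), Eckart--Young--Mirsky for the tail bound $\sum_{k>n}\sigma_k^2\le(m+1)\delta^2$, and AM--HM over the $m-n+1$ smallest eigenvalues. The one thing you should state explicitly is the degenerate case $a\in\spann S$: there $\sigma_{m+1}=0$ and $(\sum_k\sigma_k^{-2})^{-1}$ is undefined, but the distance is zero, so the bound holds trivially.
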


\begin{proof}
We prove the result only for complex Hilbert spaces (in the case of Hilbert spaces over $\mathbb{R}$ the arguments are verbatim by replacing Hermitian $*$ by transpose $T$). 
Fix \(\eta>d_n( A)_{\mathcal H}\) and a subspace \(V\subset \mathcal H\),
\(\dim V\le n\), such that $\sup_{a\in A}\dist_{\mathcal H}(a,V)\le\eta$.
Now, we define
\begin{equation}
    \Delta_m\coloneqq \sup_{a_1,\ldots,a_m\in A} \det\bigl(\ip{a_j}{a_i}_{\mathcal H}\bigr)_{i,j=1}^m,
\end{equation}
which is finite due to the boundedness of $A \subset \mathcal H$ together with Hadamard's determinant inequality.
If \(\Delta_m=0\), no family of \(m\) elements from \( A\) is linearly independent.
Hence, \(\dim\spann A<m\), and choosing $S \subset A$ as a basis of \(\spann A\) in \eqref{eq:tau_m} gives \(\tau_m(A)_{\mathcal H}=0\) and \eqref{eq:hilbert} holds.
We may
therefore assume \(\Delta_m>0\).
Then, for any \(0<\varepsilon<1\), we can choose a linearly independent set \(S = \{a_1,\ldots,a_m\}\subset A\) (an \emph{\(\varepsilon\)-approximate
\(D\)-optimal design} for \(A\)) whose Gram matrix \(\mathbf G = (\ip{a_j}{a_i}_{\mathcal H})_{i,j=1}^m\) satisfies
\begin{equation}\label{f20}
\det\mathbf G\ge(1-\varepsilon)\Delta_m>0\,.
\end{equation}
In particular, \(\mathbf G\) is invertible.
Fix now \(a\in A\) with \(a\notin\spann S\) (otherwise $\dist_{\mathcal H}(a,\spann S) = 0$).
By defining $\mathbf g=(\ip{a}{a_i}_{\mathcal  H})_{i=1}^m \in \mathbb{C}^m$,
the Gram matrix $\mathbf G_a \in \mathbb C^{(m+1)\times (m+1)}$ of the extended set \(S_a = \{a_1,\ldots,a_m,a \}\subset A\) has the block form
\begin{equation}
    \mathbf G_a= \begin{pmatrix} \mathbf G&\mathbf g\\ \mathbf g^*&\norm{a}_{\mathcal H}^2
\end{pmatrix}.
\end{equation}
The matrix $\mathbf G_a$ is strictly positive definite because the elements of $S_a$ are linearly independent.
Now, the Schur complement identity \cite[0.8.5]{HoJo13} gives $\det\mathbf G_a =\det\mathbf G \cdot (\norm{a}_{\mathcal H}^2 -\mathbf g^*\mathbf G^{-1}\mathbf g)$.
Further, denoting with $P_{\spann S}\, a \coloneqq  \sum_{j=1}^m c_j a_j$ the projection onto $
\spann S$, we have  by \(a-P_{\spann S}\, a \perp \spann S\) that $\mathbf c$ satisfies \(\mathbf G \mathbf c = \mathbf g\).
Thus, we get
\begin{equation}
    \dist_{\mathcal H}(a,\spann S)^2
=\norm{a}_{\mathcal H}^2-\norm{P_{\spann S}\, a}_{\mathcal H}^2
=\norm{a}_{\mathcal H}^2-\mathbf g^*\mathbf G^{-1}\mathbf g,
\end{equation}
which in turn leads to 
\begin{equation}\label{eq:schur}
\det\mathbf G_a
 =\det\mathbf G \cdot \bigl(\norm{a}_{\mathcal H}^2 -\mathbf g^*\mathbf G^{-1}\mathbf g\bigr) = \det\mathbf G \cdot \dist_{\mathcal H}(a,\spann S)^2.
\end{equation}
For any $j = 1,\ldots,m+1$, we can define the \(m\)-submatrix  \(\mathbf G_a^{(j)} \in \C^{m\times m}\) of \(\mathbf G_a\in \C^{(m+1)\times (m+1)}\), where we delete the $j$th row and column, which is itself a Gram matrix for elements from $A$.
Thus, its determinant is non-negative and bounded by \(\Delta_m\).
Now, the adjugate formula and $\operatorname{adj}(\mathbf G_a)_{jj}
=\det\mathbf G_a^{(j)}$ imply that the trace of $\mathbf{G}_a^{-1}$ satisfies 
\begin{equation}\label{f15}
\tr(\mathbf G_a^{-1})\det \mathbf{G}_a = \tr\bigl(\operatorname{adj}(\mathbf G_a)\bigr)
=\sum_{j=1}^{m+1}\det\mathbf G_a^{(j)}
\leq (m+1)\Delta_m.
\end{equation}
Consequently, combining \eqref{f20}, \eqref{eq:schur}, and \eqref{f15} leads to
\begin{equation}
  \begin{split}
    \tr(\mathbf G_a^{-1})\det\mathbf G\cdot \dist_{\mathcal H}(a,\spann S)^2 = 
    \tr(\mathbf G_a^{-1})\det \mathbf{G}_a \leq
    (m+1)\Delta_m \leq \frac{m+1}{1-\varepsilon}\det \mathbf{G}.
  \end{split}
\end{equation}
This in turn gives
\begin{equation}\label{f17}
    \tr(\mathbf G_a^{-1})\dist_{\mathcal H}(a,\spann S)^2 \leq \frac{m+1}{1-\varepsilon}. 
\end{equation}
To establish \eqref{eq:hilbert}, it remains to estimate the trace on the left-hand side from below. 
Define the linear operator \(T\colon\C^{m+1}\to \mathcal H\) with $T \mathbf c
=\sum_{j=1}^{m}c_j a_j+c_{m+1}a$, so that we can make the identification \(\mathbf G_a=T^*T\). 
Denote with $\lambda_1 \geq \cdots \geq \lambda_{m+1}$ the non-increasing eigenvalues of \(\mathbf G_a=T^*T\). Below, we will prove the inequality  
\begin{equation}\label{f16}
    \sum\limits_{j=n+1}^{m+1} \lambda_j \leq \tr(T^*(I-P_V)T)=\sum\limits_{i=1}^{m} \dist_{\mathcal H}(a_i,V)^2 + \dist_{\mathcal H}(a,V)^2 \leq (m+1)\eta^2.
\end{equation}
Combining \eqref{f16}  with the inequality between the arithmetic and the harmonic mean leads to the required lower bound for $\tr(\mathbf{G}_a^{-1}) = \sum_{j=1}^{m+1}\lambda_j^{-1}$, namely 
\begin{equation}
(m-n+1)^{-1}\tr(\mathbf G_a^{-1})
\ge\frac{\sum_{j=n+1}^{m+1}\lambda_j^{-1}}{m-n+1} \geq \frac{m-n+1}{\sum_{j=n+1}^{m+1}\lambda_j}
\ge\frac{m-n+1}{(m+1)\eta^2}.
\end{equation}
Inserting this into \eqref{f17} shows that
\begin{equation}
  \dist_{\mathcal H}(a,\spann S)
\le\frac{m+1}{m-n+1}\,
\frac{\eta}{\sqrt{1-\varepsilon}}.  
\end{equation}
This estimate holds for every \(a\in A\), and the chosen set \(S\) is admissible in the definition of \(\tau_m\).
Taking the supremum over \(A\), and then letting \(\varepsilon\downarrow0\) and \(\eta\downarrow d_n( A)_{\mathcal H}\) proves \eqref{eq:hilbert}.

To establish \eqref{f16}, we denote $\mathbf{B} = T^*P_VT$ such that the first step is to estimate $\tr(\mathbf{G}_a-\mathbf{B})$ from below.
This is actually classical and a consequence of well-known results from low-rank approximation (Eckart-Young-Mirsky theorem \cite{EY36,Mi60}).
For the convenience of the reader, we give a self-contained proof.
Note that $\mathbf{G}_a-\mathbf{B}$ is positive semi-definite.
Since $\dim V \leq n$, $\mathbf{B}$ has rank at most $n$.
Thus, we may choose an orthonormal system $\{\mathbf{u}_1,\ldots,\mathbf{u}_k\}\subset \C^{m+1}$, $k=m+1-n$, in the kernel of $\mathbf{B}$ and set $U = \spann \{\mathbf{u}_1,\ldots,\mathbf{u}_k\}$.
Then, we have 
\begin{equation}\label{f18}
    \tr(\mathbf{G}_a-\mathbf{B}) \geq \sum\limits_{j=1}^k \bigl\langle (\mathbf{G}_a-\mathbf{B})\mathbf{u}_j,\mathbf{u}_j\bigr\rangle = \sum\limits_{j=1}^k \mathbf{u}_j^*\mathbf{G}_a\mathbf{u}_j  = \sum\limits_{j=1}^k \tr(\mathbf{G}_a\mathbf{u}_j\mathbf{u}_j^*) = \tr(\mathbf{G}_a\mathbf{P}_U),
\end{equation}
where we use the orthogonal projection $\mathbf{P}_U = \sum_{j=1}^k \mathbf{u}_j\mathbf{u}_j^*$.
Now, fix an orthonormal eigenbasis $\{\mathbf{v}_1,\ldots,\mathbf{v}_{m+1}\} \subset \C^{m+1}$ (corresponding to $\lambda_1,\ldots,\lambda_{m+1}$) of $\mathbf{G}_a$.
By construction, $p_i\coloneqq \|\mathbf{P}_U \mathbf{v}_i\|^2 \leq 1$ and $k = \tr(\mathbf{P}_U)= \sum_{i=1}^{m+1}p_i$.
Inserting this into \eqref{f18} yields
\begin{equation}\label{f19}
  \tr(\mathbf{G}_a-\mathbf{B}) \geq \tr(\mathbf{G}_a\mathbf{P}_U) = \sum\limits_{i=1}^{m+1}\langle \mathbf{G}_a \mathbf{P}_U\mathbf{v}_i, \mathbf{v}_i\rangle = \sum\limits_{i=1}^{m+1} \lambda_ip_i. 
  \end{equation}
Now, we observe that
\begin{equation}
\sum\limits_{i=1}^{m+1} \lambda_ip_i - \sum\limits_{i=n+1}^{m+1} \lambda_i = \sum\limits_{i=1}^{n} \lambda_ip_i - \sum\limits_{i=n+1}^{m+1} \lambda_i(1-p_i)\geq \lambda_n\Big(\sum\limits_{i=1}^{n} p_i - \sum\limits_{i=n+1}^{m+1} (1-p_i)\Big) = 0,
\end{equation}
which together with \eqref{f19} implies the first inequality in \eqref{f16}.
Regarding the remaining chain, we have
\begin{equation}
\begin{split}
\tr(T^*(I-P_V)T) &=  \sum\limits_{i=1}^{m+1} \langle (T^*(I-P_V)T)\mathbf{e}_i,\mathbf{e}_i\rangle_{\C^{m+1}} =\sum\limits_{i=1}^{m+1} \langle (I-P_V)T\mathbf{e}_i,T\mathbf{e}_i\rangle_{\mathcal H}\\
&=\sum\limits_{i=1}^{m} \langle (I-P_V)a_i,a_i\rangle_{\mathcal H} + \langle (I-P_V)a,a\rangle_{\mathcal H}\\
&=\sum\limits_{i=1}^{m} \dist_{\mathcal H}(a_i,V)^2 + \dist_{\mathcal H}(a,V)^2\leq (m+1)\eta^2.
\end{split}
\end{equation}
This concludes the proof.
\end{proof}
\begin{corollary}[Constant oversampling]\label{cor:rb}
Let \(A \subset \mathcal H \) be nonempty and bounded.
Then, for \(b>1\) and \(n\ge1\), it holds
\begin{equation}
    \tau_{\lceil bn\rceil}(A)_{\mathcal H}
\le\frac b{b-1}\,d_n(A)_{\mathcal H},
\qquad
\tau_{2n}( A)_{\mathcal H}
\le2\,d_n( A)_{\mathcal H}.    
\end{equation}
\end{corollary}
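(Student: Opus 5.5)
This is a direct specialization of Theorem~\ref{thm:hilbert} with \(m=\lceil bn\rceil\). Since \(b>1\) and \(n\ge1\), we have \(m=\lceil bn\rceil\ge bn>n\), so the hypothesis \(m\ge n\ge1\) holds. Theorem~\ref{thm:hilbert} then gives
\[
\tau_{\lceil bn\rceil}(A)_{\mathcal H}\le \frac{m+1}{m-n+1}\,d_n(A)_{\mathcal H}.
\]
It therefore remains to bound the scalar factor by \(b/(b-1)\).

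The function \(x\mapsto (x+1)/(x-n+1)=1+n/(x-n+1)\) is decreasing in \(x\) for \(x>n-1\). Using \(m\ge bn\) we get
\[
\frac{m+1}{m-n+1}\le \frac{bn+1}{bn-n+1}.
\]
Next, \((bn+1)/((b-1)n+1)\le b/(b-1)\) is equivalent, after clearing the positive denominators, to
\[
(b-1)(bn+1)\le b\bigl((b-1)n+1\bigr),
\]
that is, \(b^2n-bn+b-1\le b^2n-bn+b\). This reduces to \(-1\le 0\), which is true. This yields the first inequality.

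For the second inequality, take \(b=2\). Then \(\lceil 2n\rceil=2n\) and \(b/(b-1)=2\). Directly, \((2n+1)/(n+1)\le 2\) also holds.

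There is no real obstacle here. The only points needing care are the monotonicity step, which handles the rounding in \(\lceil bn\rceil\), and checking that \(m-n+1>0\).
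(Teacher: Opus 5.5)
Your proposal is correct and matches the paper's proof: both apply Theorem~\ref{thm:hilbert} with \(m=\lceil bn\rceil\), use that \(t\mapsto(t+1)/(t-n+1)\) is decreasing for \(t>n-1\), and then bound \((bn+1)/((b-1)n+1)\le b/(b-1)\). You additionally verify that last inequality explicitly and check \(m\ge n\), which the paper leaves implicit.
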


\begin{proof}
For \(m=\lceil bn\rceil\), the function
\(t\mapsto(t+1)/(t-n+1)\) is decreasing for $t > n - 1$, and hence
\begin{equation}
   \frac{m+1}{m-n+1}
\le\frac{bn+1}{(b-1)n+1}
\le\frac b{b-1}.
\end{equation}
The second estimate is the case $b=2$.
\end{proof}

\begin{remark}[Related literature]
{\em (i)} The quantity \(\tau_n\) is denoted by \(\bar d_n\) in \cite{Wojtaszczyk15} and \cite[Thm.~4.1]{BCDDPW11}, and called greedy Kolmogorov width in \cite[Sec.~1.3]{DPSW26}.
For the special case \(m=n\), \cite[Thm.~3.1]{Wojtaszczyk15} gives \(\tau_n\le(n+1)d_n\) for arbitrary compact sets in Banach spaces and shows that the order \(n\) is necessary; its Hilbert space case goes back to \cite[Thm.~4.1]{BCDDPW11}, see also \cite[Thm.~4.4]{DPSW26}.
The present contribution is the two-index estimate, which makes the benefit of oversampling explicit: constant oversampling replaces the same-index factor \(n+1\) by a dimension-independent constant. 
For Banach target norms, \cite[Thm.~2.6]{NPU26} obtains a factor of order \(\sqrt n\) under constant oversampling.

{\em (ii)} The restricted-width estimates in
\cite[Thms.~4.2 and~4.3]{DPSW26} either permit arbitrary subspaces of the closed linear span or require convexity and symmetry.
They therefore do not provide an oversampled comparison for arbitrary compact sets.

{\em (iii)} A related two-index expression occurs in Frobenius-norm column subset selection.
By \cite{GuSi12}, for \(m\ge n\) one can select \(m\) columns whose span approximates the best rank-\(n\) approximation within $\sqrt{(m+1)/(m-n+1)}$, and this trade-off is optimal up to lower-order terms.
The square root, compared with \eqref{eq:main}, reflects the use of an averaged Frobenius error rather than a uniform error.
The same distinction is visible at \(m=n\): \cite{KNU25} obtains the factor \(\sqrt{n+1}\) for \(f\in L^2(\Omega;\mathcal H)\), whereas \eqref{eq:main} and \cite[Thm.~4.1]{BCDDPW11} give \(n+1\) in the uniform setting.
Similar techniques for selecting submatrices of maximal volume \cite{GoTy01,GoOsSaTyZa10} and their randomized counterpart, volume sampling \cite{DeRaVeWa06}, are classical in low-rank matrix approximation.
\end{remark}

\section{Sharpness of the leading factor}\label{sec:sharpness}

We show that the constant in Theorem~\ref{thm:hilbert} cannot be replaced by any smaller one.
For each pair \(m\ge n\ge1\), we construct a one-parameter family \(A_\delta\subset\ell_2^{m+1}\) whose ratio \(\tau_m(A_\delta)/d_n(A_\delta)\) increases to \((m+1)/(m-n+1)\) as \(\delta\downarrow0\); the value itself is not attained.

\begin{proposition}\label{prop:sharp-index}
Let \(m\ge n\ge1\), and set \(N=m+1\) and \(k=m-n+1\).
For any \(0<\delta\le1\), there is a set
\(A_\delta=\{\mathbf a_1,\ldots,\mathbf a_N\}\subset\C^N\)
such that
\begin{align}
d_n( A_\delta)_{\ell_2^N}
&=\delta\sqrt{\frac kN}, \label{eq:sharp-d}\\
\tau_m(A_\delta)_{\ell_2^N}
&=\delta\sqrt{\frac{N}{k+n\delta^2}}. \label{eq:sharp-tau}
\end{align}
In particular, we get that 
\begin{equation}
    \frac{\tau_m( A_\delta)_{\ell_2^N}}
     {d_n( A_\delta)_{\ell_2^N}} =\frac{N}{\sqrt{k(k+n\delta^2)}} \longrightarrow \frac Nk=\frac{m+1}{m-n+1} \quad \text{as }\delta\downarrow0.
\end{equation}
\end{proposition}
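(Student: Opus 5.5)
The plan is to realize \(A_\delta\) as the columns of the square root of an explicitly chosen Gram matrix. I would choose the matrix to have equal diagonal entries and a spectral split of \(n\) large eigenvalues and \(k\) small ones. Let \(\omega=e^{2\pi i/N}\) and let \(\mathbf f_0,\ldots,\mathbf f_{N-1}\) be the discrete Fourier basis of \(\C^N\), with \((\mathbf f_l)_i=\omega^{li}/\sqrt N\). Every entry of every \(\mathbf f_l\) then has modulus \(1/\sqrt N\). Set
\[
\mathbf G=\sum_{l=0}^{n-1}\mathbf f_l\mathbf f_l^*+\delta^2\sum_{l=n}^{N-1}\mathbf f_l\mathbf f_l^*,
\]
which is circulant and positive definite with eigenvalues \(1\) (multiplicity \(n\)) and \(\delta^2\) (multiplicity \(k\)). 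Take \(\mathbf a_i\coloneqq\mathbf G^{1/2}\mathbf e_i\), so that \(\ip{\mathbf a_j}{\mathbf a_i}=\mathbf G_{ij}\). For the whole computation I would identify \(\spann A_\delta=\C^N\) with coefficient space through the unitary \(\mathbf G^{1/2}\)-geometry.

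For \eqref{eq:sharp-d}, the upper bound comes from the subspace \(V=\mathbf G^{1/2}\spann\{\mathbf f_0,\ldots,\mathbf f_{n-1}\}=\spann\{\mathbf f_0,\ldots,\mathbf f_{n-1}\}\). Since \(\mathbf a_i=\sum_l\sqrt{\lambda_l}\,\overline{(\mathbf f_l)_i}\,\mathbf f_l\), its residual is
\[
\dist(\mathbf a_i,V)^2=\delta^2\sum_{l\ge n}|(\mathbf f_l)_i|^2=\delta^2 k/N,
\]
and this holds for every \(i\). For the lower bound I reuse inequality \eqref{f16} from the proof of Theorem~\ref{thm:hilbert}, applied to the \(N\) vectors \(\mathbf a_1,\ldots,\mathbf a_N\) and an arbitrary \(V\) with \(\dim V\le n\). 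It gives \(\sum_i\dist(\mathbf a_i,V)^2\ge\sum_{j>n}\lambda_j=k\delta^2\). Hence \(\max_i\dist(\mathbf a_i,V)^2\ge\delta^2k/N\), which establishes equality in \eqref{eq:sharp-d}.

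For \eqref{eq:sharp-tau}, any \(S\subset A_\delta\) with \(|S|\le m=N-1\) misses some \(\mathbf a_j\). Therefore \(\sup_a\dist(a,\spann S)\ge\dist(\mathbf a_j,\spann\{\mathbf a_i:i\ne j\})\). Conversely, \(S=A_\delta\setminus\{\mathbf a_j\}\) attains exactly this value, because all other points lie in \(S\). So
\[
\tau_m=\min_j\dist(\mathbf a_j,\spann\{\mathbf a_i:i\neq j\}).
\]
By the Schur-complement identity \eqref{eq:schur}, applied with the roles arranged so that \(\mathbf a_j\) is the added element, this distance equals \(\bigl((\mathbf G^{-1})_{jj}\bigr)^{-1/2}\). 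Here I use the standard fact \(\det\mathbf G/\det\mathbf G^{(j)}=1/(\mathbf G^{-1})_{jj}\). Now
\[
\mathbf G^{-1}=\sum_{l<n}\mathbf f_l\mathbf f_l^*+\delta^{-2}\sum_{l\ge n}\mathbf f_l\mathbf f_l^*
\]
has constant diagonal \(n/N+k/(N\delta^2)=(k+n\delta^2)/(N\delta^2)\). This yields \(\tau_m=\delta\sqrt{N/(k+n\delta^2)}\) for every \(j\). The ratio formula and the limit as \(\delta\downarrow0\) are then immediate algebra.

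The main obstacle is arranging that the \(n\)-dimensional errors and the leave-one-out distances are simultaneously uniform in \(i\). The flat-modulus Fourier eigenbasis handles both at once, since it makes the diagonals of both spectral projections constant (\(n/N\) and \(k/N\)). The remaining care needed is to make the lower bound for \(d_n\) rigorous via \eqref{f16} for arbitrary \(V\subset\C^N\), not just for subspaces of the eigenbasis.
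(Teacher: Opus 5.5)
Your proposal is correct and is essentially the paper's proof. Your $\mathbf G^{1/2}$ is exactly the paper's $\mathbf T_\delta=\mathbf P_{U^\perp}+\delta\mathbf P_U$ with the Fourier columns relabelled, and both bounds for $d_n$ are obtained the same way: the span of the eigenvalue-one Fourier vectors for the upper bound, and the Eckart--Young--Mirsky inequality as in \eqref{f16} for the lower bound. The only difference is cosmetic: you get the leave-one-out distance from the Schur complement and the cofactor formula for $(\mathbf G^{-1})_{jj}$, whereas the paper uses the dual vector $\mathbf z_j=\mathbf T_\delta^{-*}\mathbf e_j$ and the hyperplane-distance formula, and since $\norm{\mathbf z_j}_2^2=(\mathbf G^{-1})_{jj}$ both give the same value.
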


\begin{proof}
Let $\mathbf F = N^{-1/2}(e^{2\pi\mathrm i(j-1)(\ell-1)/N})_{j,\ell=1}^N$ denote the discrete Fourier matrix, and let \(\mathbf P_U\) be the orthogonal
projection onto the span $U$ of its first \(k\) columns.
Let
\(\mathbf P_{U^\perp}=\mathbf I_N-\mathbf P_U\) denote the orthogonal projection onto the complement.
Since every entry of \(\mathbf F\) has absolute value \(N^{-1/2}\), \(\mathrm{rank}\, \mathbf P_U = k\), and \(\mathrm{rank}\, \mathbf P_{U^\perp} = n\),  we have for $1\le j\le N$ that
\begin{equation}\label{eq:constant-diagonal}
 \ip{\mathbf P_{U^\perp}\mathbf e_j}{\mathbf e_j}=\frac nN,
 \qquad
 \ip{\mathbf P_U\mathbf e_j}{\mathbf e_j}=\frac kN\,.
\end{equation}
Set \(\mathbf T_\delta=\mathbf  P_{U^\perp}+\delta\mathbf P_U\) and define
\begin{equation}\label{eq:lifted-vectors}
 \mathbf a_j\coloneqq \mathbf T_\delta\mathbf e_j.
\end{equation}
The operator \(\mathbf T_\delta\) is invertible, so the elements in \(A_\delta = \{\mathbf a_1, \ldots, \mathbf a_N\}\) are linearly independent.
Taking \(V=\ran\mathbf P_{U^\perp}\) gives
\begin{equation}
    \dist(\mathbf a_j,V) =\delta\norm{\mathbf P_U\mathbf e_j}_2=\delta\sqrt{\frac kN},
\end{equation}
\smash{and thus $d_n( A_\delta)_{\ell_2^N} \leq \delta\sqrt{k/N}$}.
For the converse, we use the Eckart-Young-Mirsky inequality \cite{EY36,Mi60} as for \eqref{f16}, which is here applied to $\mathbf{T}_\delta^*\mathbf{T}_\delta = \mathbf{P}_{U^\perp}+\delta^2\mathbf{P}_U$.
Fix \(W\subset\C^N\) with dimension at most \(n\).
The eigenvalues of \(\mathbf{T}_\delta^*\mathbf{T}_\delta\) are \(1\), with multiplicity \(n\), and \(\delta^2\), with multiplicity \(k=N-n\), so that \(\lambda_1=\dots=\lambda_n=1\) and \(\lambda_{n+1}=\dots=\lambda_N=\delta^2\) due to $\delta \leq 1$.
Thus, we get 
\begin{equation}
    \sum_{j=1}^N\dist(\mathbf a_j,W)^2 =\norm{\mathbf T_\delta-\mathbf P_W\mathbf T_\delta}_{F}^2 \ge \sum\limits_{i=n+1}^N \lambda_i = k\delta^2.
\end{equation}
At least one distance is therefore at least
\(\delta\sqrt{k/N}\), which proves \eqref{eq:sharp-d}.

Now, we turn to the second claim.
For \(\mathbf z_j=\mathbf T_\delta^{-*}\mathbf e_j\), we have \(\ip{\mathbf a_l}{\mathbf z_j}=\ip{\mathbf e_l}{\mathbf e_j}\).
Consequently, \(\mathbf z_j\) is orthogonal to $S_j \coloneqq \spann (A_\delta \setminus \{\mathbf a_j\})$, whereas \(\ip{\mathbf a_j}{\mathbf z_j}=1\).
The hyperplane-distance formula therefore gives
\begin{equation}\label{eq:omitted-distance}
 \dist_{\ell_2^N}(\mathbf a_j,S_j)
 =\norm{\mathbf z_j}^{-1}_2.
\end{equation}
Combining \(\mathbf T_\delta^{-1}
=\mathbf P_{U^\perp} + \delta^{-1}\mathbf P_U\) and \eqref{eq:constant-diagonal}
yields
\begin{equation}\label{eq:norm_z}
    \norm{\mathbf z_j}_2^2 =\bigl \langle (\mathbf P_{U^\perp} + \delta^{-2}\mathbf P_U)\mathbf e_j, \mathbf e_j \bigr \rangle = \frac{n\delta^2+k}{N\delta^2}.
\end{equation}
Every subset of $A_\delta$ with at most \(m=N-1\) vectors omits some \(\mathbf a_j\), so that \eqref{eq:omitted-distance} and \eqref{eq:norm_z} give the lower bound in
\eqref{eq:sharp-tau}. Selecting all vectors except one gives equality.
\end{proof}

\section{The kernel translates dictionary, sampling and Gelfand widths}\label{sec:dictionary}
Let \(\mathcal H_K\) be an RKHS on $D$ with kernel $K$.
In this section, we recall the tools and results from \cite{NPU26}.
These also involve the approximation numbers
\begin{equation}
    a_n(B_F)_\infty
\coloneqq \inf_{\substack{T\colon F\to B(D)\ {\rm bounded\ linear}\\ \operatorname{rank}T\le n}} \sup_{\|f\|_F\leq 1}\norm{f-Tf}_\infty.
\end{equation}
Further, we write $a_x\coloneqq K(\,\cdot\,,x)$ and $\mathcal K\coloneqq\{a_x:x\in D\}\subset\mathcal H_K$.
Given a finite set \(X\subset D\), let
\begin{equation}
    V_X\coloneqq\spann\{a_x:x\in X\},\qquad \operatorname{Pow}_X(x)
 \coloneqq\dist_{\mathcal H_K}(a_x,V_X).
\end{equation}
The associated subset width is
\begin{equation}\label{eq:tau-kernel}
\tau_m(\mathcal K)_{\mathcal H_K}
\coloneqq
\inf_{\substack{X\subset D\\|X|\le m}}
\sup_{x\in D}\operatorname{Pow}_X(x).
\end{equation}
The quantity $\mathrm{Pow}_X(\cdot)$ is the classical power function of kernel interpolation. In this sense, the $P$-greedy algorithm, whose convergence rates have been established in \cite{SaHaa17}, see also the stabilized and target data-dependent variants in \cite{WeSaHa21, WeSaHa23}, is precisely a greedy version of the kernel $D$-optimal design underlying Section~\ref{sec:selection}.
Accordingly, \cite{NPU26} analyzes the optimal recovery error produced by such greedy designs, whereas Theorem~\ref{thm:main} is an existence result for (approximate) maximizers.

Below, we recall the two key results concerning the relation of sampling and Gelfand widths. For the convenience of the reader, we include the proof ideas from \cite{NPU26}.
\begin{proposition}[Prop.\ 3.2 in \cite{NPU26}]\label{prop:sampling-dict}
For every \(m\ge0\), it holds that
\begin{equation}
    g_m^{\lin}(B_{\mathcal H_K})_\infty =\tau_m(\mathcal K)_{\mathcal H_K}.
\end{equation}
\end{proposition}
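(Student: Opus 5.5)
We prove the two inequalities $g_m^{\lin}\le\tau_m$ and $\tau_m\le g_m^{\lin}$ separately. The key tool is the identity $f(x)=\ip{f}{a_x}$, which turns pointwise errors into inner products with kernel translates.

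\emph{Upper bound $g_m^{\lin}\le\tau_m$.} Fix $X=\{x_1,\dots,x_m\}\subset D$ and let $P_X$ be the orthogonal projection of $\mathcal H_K$ onto $V_X$. For $f\in\mathcal H_K$, the function $P_Xf$ depends only on the data $\ip{f}{a_{x_i}}=f(x_i)$. Indeed, $P_Xf=\sum_j c_ja_{x_j}$ with $\mathbf G\mathbf c=(f(x_i))_i$, where $\mathbf G$ is the kernel Gram matrix; if $\mathbf G$ is singular we use a pseudo-inverse, or first reduce to a maximal linearly independent subfamily. Hence $\varphi(f(x_1),\dots,f(x_m))\coloneqq P_Xf$ is linear in the data and is an admissible recovery map. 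For every $x\in D$, since $P_X$ is self-adjoint and idempotent,
\begin{equation*}
|f(x)-(P_Xf)(x)|=|\ip{(I-P_X)f}{a_x}|=|\ip{f}{(I-P_X)a_x}|\le\norm{f}_{\mathcal H_K}\operatorname{Pow}_X(x).
\end{equation*}
Taking the supremum over $x\in D$ and $\norm{f}\le1$, and then the infimum over $X$, gives $g_m^{\lin}\le\tau_m(\mathcal K)$.

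\emph{Lower bound $\tau_m\le g_m^{\lin}$.} Let $x_1,\dots,x_m$ and a linear $\varphi$ be arbitrary, and fix $x\in D$. Write $h\coloneqq(I-P_X)a_x$, assume $h\neq0$, and set $f\coloneqq h/\norm{h}$. Then $f\perp V_X$, so $f(x_i)=\ip{f}{a_{x_i}}=0$ for all $i$. Since $\varphi$ is linear, $\varphi(0)=0$, and the algorithm outputs $0$ for $f$. The error at $x$ is therefore
\begin{equation*}
|f(x)|=\ip{h}{a_x}/\norm{h}=\norm{h}=\operatorname{Pow}_X(x),
\end{equation*}
using $\ip{h}{a_x}=\norm{h}^2$, which holds because $h\perp V_X$. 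Thus the worst-case error of $(x_i,\varphi)$ is at least $\sup_x\operatorname{Pow}_X(x)\ge\tau_m$, with $|X|\le m$ (repeated points only shrink $X$). Taking the infimum over all algorithms gives $g_m^{\lin}\ge\tau_m$.

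The delicate step is well-definedness of the recovery map in the upper bound when the kernel translates are linearly dependent or points coincide. We handle it by noting that $P_Xf$ is determined by the data through any basis of $V_X$ chosen among the $a_{x_i}$. The case $m=0$ is consistent: $V_X=\{0\}$ and both sides equal $\sup_x\sqrt{K(x,x)}$.
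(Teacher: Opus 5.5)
Your proof is correct and follows the paper's argument. The upper bound uses the kernel interpolant $P_{V_X}f$ together with Cauchy--Schwarz against $(I-P_{V_X})a_x$, and the lower bound uses the normalized function $(I-P_{V_X})a_x/\operatorname{Pow}_X(x)$, which vanishes on $X$. The only difference is that you use $\varphi(0)=0$ from linearity, whereas the paper's $\pm h$ fooling argument also covers nonlinear recovery maps; for $g_m^{\lin}$ your version suffices.
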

\begin{proof} 
{\em Step 1.} First, we establish $g_m^{\lin}(B_{\mathcal H_K})_\infty\le\tau_m(\mathcal{K})_{\mathcal{H}_K}$.
Fix a set $X=\{x_1,\dots,x_k\}\subset D$ with $k=|X|\le m$.
This is admissible in \eqref{eq:glin}, since repeating a node changes neither $V_X$ nor the resulting algorithm.
The well-known kernel interpolation, see \cite[Lem.\ 3.1]{NPU26}, is nothing other than the projection $f \mapsto P_{V_X}f$.
It is of course a linear algorithm and (due to RKHS structure) it uses only the samples $(f(x_1),\ldots,f(x_k))$ from $X$.
Since $f-P_{V_X}f\perp V_X$, we get for $f\in B_{\mathcal H_K}$ and $x\in D$ that
\begin{align}
    |f(x)-P_{V_X}f(x)| &= \bigl|\ip{f-P_{V_X}f}{a_x}_{\mathcal H_K}\bigr|
    = \bigl|\ip{f-P_{V_X}f}{(I-P_{V_X})a_x}_{\mathcal H_K}\bigr| \notag\\
    & \le \norm{f-P_{V_X}f}_{\mathcal H_K} \norm{ (I-P_{V_X})a_x  }_{\mathcal H_K}\le \norm{f}_{\mathcal H_K} \mathrm{Pow}_X(x) \le \mathrm{Pow}_X(x).
\end{align}
Taking the supremum over $x\in D$ and over $f\in B_{\mathcal H_K}$, it holds for every $X \subset D$ with $|X|\le m$ that
\begin{equation}
    g_m^{\lin}(B_{\mathcal H_K})_\infty \le
    \sup_{f\in B_{\mathcal H_K}}\norm{f-P_{V_X}f}_\infty \le \norm{\mathrm{Pow}_X}_\infty.
\end{equation}
Taking the infimum over all such $X$ gives $g_m^{\lin}(B_{\mathcal H_K})_\infty\le\tau_m(\mathcal K)_{\mathcal H_K}$.

{\em Step 2.} We now show $g_m^{\lin}(B_{\mathcal H_K})_\infty\ge\tau_m(\mathcal{K})_{\mathcal{H}_K}$.
To this end, we use a fooling argument, constructing two admissible functions that produce the same data but are far apart, so that no algorithm can reconstruct both equally well.
Let an arbitrary algorithm based on the samples $f(x_1),\dots,f(x_k)$ at locations $X=\{x_1,\dots,x_k\}$ with $k=|X|\le m$ be given, and let $L\in B(D)$ be its output on the data $(0,\dots,0)$.
If $\norm{\mathrm{Pow}_X}_\infty=0$ there is nothing to prove.
We fix $0<\delta<\norm{\mathrm{Pow}_X}_\infty$ and pick $x^*$ with $\mathrm{Pow}_X(x^*)\ge \norm{\mathrm{Pow}_X}_\infty-\delta>0$.
The function
\begin{equation}
    h \coloneqq \frac{(I-P_{V_X})a_{x^*}}{\mathrm{Pow}_X(x^*)}
\end{equation}
satisfies $\norm{h}_{\mathcal H_K}=1$, $h(x_i)=\ip{h}{a_{x_i}}_{\mathcal H_K}=0$ for all $i\le k$, and $h(x^*)=\ip{h}{a_{x^*}}_{\mathcal H_K}=\mathrm{Pow}_X(x^*)$.
Thus, both $h$ and $-h$ belong to $B_{\mathcal H_K}$ and produce the same data $h(x_i)=-h(x_i)=0$ for $x_i\in X$, so that the
algorithm returns the same $L$ in either case. By the triangle inequality, we have that
\begin{equation}
    \max\bigl\{\norm{h-L}_\infty, \norm{-h-L}_\infty\bigr\} \ge \norm{h}_\infty \ge |h(x^*)| = \mathrm{Pow}_X(x^*)\ge\norm{\mathrm{Pow}_X}_\infty-\delta,
\end{equation}
so that the worst-case error of the recovery algorithm is at least $\norm{\mathrm{Pow}_X}_\infty-\delta$.
Letting $\delta\to0$ and taking the infimum over $X$ gives $g_m^{\lin}(B_{\mathcal H_K})_\infty\ge\tau_m(\mathcal{K})_{\mathcal{H}_K}$.
\end{proof}

\begin{proposition}[Prop.\ 3.3 in \cite{NPU26}]
\label{prop:gelfand-dict}
For every \(n\ge0\), it holds that
\begin{equation}
    c_n(B_{\mathcal H_K})_\infty
 =a_n(B_{\mathcal H_K})_\infty
 =d_n(\mathcal K)_{\mathcal H_K}.
\end{equation}
\end{proposition}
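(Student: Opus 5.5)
We prove the chain of inequalities
\[
d_n(\mathcal K)_{\mathcal H_K}\le c_n(B_{\mathcal H_K})_\infty\le a_n(B_{\mathcal H_K})_\infty\le d_n(\mathcal K)_{\mathcal H_K}.
\]
The middle inequality is the general fact that Gelfand widths are dominated by approximation numbers. Given a bounded linear \(T\colon\mathcal H_K\to B(D)\) with \(\operatorname{rank}T\le n\), we write \(T=\varphi\circ N\) with \(N\colon\mathcal H_K\to\C^n\) bounded linear (coordinates of \(Tf\) in a basis of \(\ran T\)) and \(\varphi\) linear. This pair is admissible in \eqref{eq:cn} with the same error.

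For \(a_n\le d_n\), fix \(\eta>d_n(\mathcal K)_{\mathcal H_K}\) and a subspace \(V\) with \(\dim V\le n\) and \(\sup_x\dist(a_x,V)\le\eta\). Take \(T=P_V\), viewed as a map into \(B(D)\) via the bounded embedding; it is bounded, linear and of rank at most \(n\). As in Step 1 of Proposition~\ref{prop:sampling-dict}, for \(f\in B_{\mathcal H_K}\) we have
\[
|f(x)-P_Vf(x)|=|\ip{(I-P_V)f}{(I-P_V)a_x}|\le\norm{(I-P_V)a_x}\le\eta .
\]
Letting \(\eta\downarrow d_n\) gives the claim.

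The main step is \(d_n(\mathcal K)\le c_n\). We use a fooling argument as in Step 2 of Proposition~\ref{prop:sampling-dict}. Fix \(N\colon\mathcal H_K\to\C^n\) bounded linear and an arbitrary \(\varphi\). By Riesz representation, \(Nf=(\ip{f}{g_i})_{i=1}^n\) for some \(g_i\in\mathcal H_K\). Let \(W=\spann\{g_1,\dots,g_n\}\), so \(\dim W\le n\) and \(\ker N=W^\perp\). For any \(x\in D\) with \(\dist(a_x,W)>0\), set \(h=(I-P_W)a_x/\norm{(I-P_W)a_x}\). Then \(h\in B_{\mathcal H_K}\), \(Nh=N(-h)=0\), and \(h(x)=\ip{h}{a_x}=\dist(a_x,W)\). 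With \(L=\varphi(0)\), the triangle inequality gives
\[
\max\{\norm{h-L}_\infty,\norm{-h-L}_\infty\}\ge|h(x)|=\dist(a_x,W).
\]
Taking the supremum over \(x\), the worst-case error of \((N,\varphi)\) is at least \(\sup_x\dist(a_x,W)\ge d_n(\mathcal K)_{\mathcal H_K}\). Taking the infimum over \((N,\varphi)\) finishes this inequality.

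The only delicate point is that \(\varphi\) is arbitrary (nonlinear). The fooling argument handles this because it uses only \(h\) and \(-h\) sharing identical data. The case \(n=0\) is covered by the same argument with \(W=\{0\}\).
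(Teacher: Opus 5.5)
Your proof is correct, but it is organized differently from the paper's. The paper does not prove \(c_n=a_n\) itself: it cites this from \cite[Sect.~4.2.2]{NW1} and only shows \(a_n=d_n\). For the lower bound it writes a rank-\(n\) operator as \(T=\sum_{j\le n}\ip{\cdot}{v_j}_{\mathcal H_K}g_j\) and computes exactly \(\sup_{f\in B_{\mathcal H_K}}|(f-Tf)(x)|=\bigl\|a_x-\sum_{j\le n}\overline{g_j(x)}\,v_j\bigr\|_{\mathcal H_K}\ge\dist_{\mathcal H_K}(a_x,W)\). That computation only covers linear methods, so the nonlinear recovery maps allowed in \(c_n\) are handled by the citation. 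You instead close the cycle \(d_n\le c_n\le a_n\le d_n\). You prove \(d_n\le c_n\) directly with the fooling argument from Step~2 of Proposition~\ref{prop:sampling-dict}, using \(\ker N=W^\perp\) in place of \(V_X^\perp\), and this works for arbitrary \(\varphi\). Your proof is therefore fully self-contained and yields \(c_n=a_n\) in this setting as a byproduct rather than as an imported fact. It also makes Propositions~\ref{prop:sampling-dict} and~\ref{prop:gelfand-dict} structurally parallel. The paper's computation, in turn, gives the exact pointwise worst-case error of every linear rank-\(n\) method. The upper bound \(a_n\le d_n\) via the orthogonal projection is the same in both. One minor point you could state explicitly: the map \(N\) in your factorization \(T=\varphi\circ N\) is bounded because the coordinate functionals on the finite-dimensional space \(\ran T\) are continuous.
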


\begin{proof}
We use the relation $c_n(B_{\mathcal H_K})_\infty= a_n(B_{\mathcal H_K})_\infty$ in the case $F = \mathcal{H}_{K}$, see \cite[Sect.\ 4.2.2]{NW1}.
Let $T$ be a linear operator of rank at most $n$.
We may assume $T = \sum_{j\le n}\ip{\cdot}{v_j}_{\mathcal H_K} g_j$ with $v_j\in \mathcal H_K$ and $g_j\in B(D)$, the functionals $f\mapsto\ip{f}{v_j}_{\mathcal H_K}$ being the Riesz representations of the bounded coefficient functionals of $T$.
Then for $f\in B_{\mathcal H_K}$ and $x\in D$, it holds $(f-Tf)(x) = \langle f, a_x-\sum_{j\le n}\overline{g_j(x)} v_j\rangle_{\mathcal H_K}$, so that
\begin{equation}
    \sup_{f\in B_{\mathcal H_K}}|(f-Tf)(x)| = \Bigl\| a_x-\sum_{j\le n}\overline{g_j(x)} v_j\Bigr\|_{\mathcal H_K}.
\end{equation}
Taking the supremum over $x$ shows that every linear operator $T$ of rank at most $n$ satisfies $\sup_{f\in B_{\mathcal H_K}}\norm{f-Tf}_\infty\ge\sup_{x\in D}\dist_{\mathcal H_K}(a_x,W)$ with $W=\spann\{v_1,\dots,v_n\}$, which in turn implies $a_n(B_{\mathcal H_K})_\infty\ge d_n(\mathcal{K})_{\mathcal H_K}$.

Conversely, fix $W\subset \mathcal H_K$ with $\dim W=l\le n$ and an orthonormal basis $w_1,\dots,w_l$ of $W$.
Then, let $T\coloneqq\sum_{j\le l}\ip{\cdot}{w_j}_{\mathcal H_K} w_j$, so that $Tf$ is the orthogonal projection of $f$ onto $W$ and $T$ has rank at most $n$. 
Its range functionals are $x\mapsto w_j(x)$, which by the reproducing property are bounded by $R\coloneqq \sup_{x\in D}\sqrt{K(x,x)}$ and hence admissible elements of $B(D)$. 
This gives $a_n(B_{\mathcal H_K})_\infty\le \sup_{x\in D}\dist_{\mathcal H_K}(a_x,W)$ and therefore $a_n(B_{\mathcal H_K})_\infty=d_n(\mathcal{K})_{\mathcal H_K}$.
\end{proof}

\begin{proof}[Proof of Theorem \ref{thm:main}]
We apply Theorem \ref{thm:hilbert} to \(A = \mathcal K\subset\mathcal H_K\).
The required boundedness of $A$ is a consequence of $\|K(\cdot,x)\|_{\mathcal H_K} = \sqrt{K(x,x)}\leq R$.
Then, we apply
Propositions~\ref{prop:sampling-dict} and~\ref{prop:gelfand-dict}. 

The sharpness assertion is derived from Proposition~\ref{prop:sharp-index} as follows. 
On \(D=\{1,\ldots,N\}\), define a kernel via $K_\delta(i,j)=\ip{\mathbf a_j}{\mathbf a_i}_{\ell_2^N}$, where the $\mathbf a_j \in \C^N$ are chosen as defined in \eqref{eq:lifted-vectors}.
This kernel is strictly positive definite (due to invertibility of $T_\delta$) and bounded since 
\begin{equation}
    K_\delta(j,j)=\norm{\mathbf a_j}_2^2 = \frac nN+\delta^2\frac kN\le1.
\end{equation}
Its kernel translates are isometric to \(A_\delta = \{\mathbf a_1,\ldots,\mathbf a_N\}\). Propositions
\ref{prop:sampling-dict} and~\ref{prop:gelfand-dict} now bridge from Proposition \ref{prop:sharp-index} to the sharpness in Theorem \ref{thm:main}. We have defined a sequence of RKHSs for which the leading factor is needed in the limit.
\end{proof}

\paragraph{Acknowledgment.}
TU is supported by the German Research Foundation (DFG) with grant Ul403/4-1.
The statement discovery and initial proof construction were assisted through LLMs by Anthropic and OpenAI.
Additional verification, presentation and writing were carefully carried out by the authors, who take full responsibility for this work.

\end{document}